\newcommand{\scal}[2]{\langle{{#1},{#2}}\rangle}
\newcommand{\RR}{\ensuremath{\mathbb R}}
\newcommand{\RX}{\ensuremath{\,\left]-\infty,+\infty\right]}}
\newcommand{\RXX}{\ensuremath{\,\left[-\infty,+\infty\right]}}
\newcommand{\thalb}{\ensuremath{\tfrac{1}{2}}}
\newcommand{\menge}[2]{\big\{{#1} \mid {#2}\big\}}
\newcommand{\To}{\ensuremath{\rightrightarrows}}
\newcommand{\dom}{\ensuremath{\operatorname{dom}}}
\newcommand{\gra}{\ensuremath{\operatorname{gra}}}
\newcommand{\Id}{\ensuremath{\operatorname{Id}}}
\renewcommand{\phi}{\ensuremath{\varphi}}
\newtheorem{theorem}{Theorem}[section]
\newtheorem{fact}[theorem]{Fact}
\newtheorem{corollary}[theorem]{Corollary}
\newtheorem{proposition}[theorem]{Proposition}
\newtheorem{definition}[theorem]{Definition}
\theoremstyle{plain}{\theorembodyfont{\rmfamily}
}
\theoremstyle{plain}{\theorembodyfont{\rmfamily}
}
\theoremstyle{plain}{\theorembodyfont{\rmfamily}
}
\theoremstyle{plain}{\theorembodyfont{\rmfamily}
}
\theoremstyle{plain}{\theorembodyfont{\rmfamily}
\newtheorem{remark}[theorem]{Remark}}
\theoremstyle{plain}{\theorembodyfont{\rmfamily}
}
\begin{document}


\title{\textsc{The Br\'{e}zis-Browder Theorem revisited and properties of Fitzpatrick functions of order $n$}}

\author{
Liangjin\ Yao\thanks{Mathematics, Irving K.\ Barber School, UBC Okanagan,
Kelowna, British Columbia V1V 1V7, Canada.
E-mail:  \texttt{ljinyao@interchange.ubc.ca}.}}
 \vskip 3mm

\date{May 22, 2009}
\maketitle

\begin{abstract} \noindent In this note, we  study maximal monotonicity of  linear relations
 (set-valued operators with linear graphs) on  reflexive Banach spaces.
 We provide a new and simpler proof
of a result due to Br\'{e}zis-Browder which states that
   a monotone linear relation with closed graph is maximal monotone
    if and only if its adjoint is monotone. We also study Fitzpatrick functions
     and give an explicit formula for Fitzpatrick functions of order $n$
     for monotone symmetric linear relations.
\end{abstract}

\noindent {\bfseries 2000 Mathematics Subject Classification:}\\
{Primary 47A06, 47H05;
Secondary 47A05, 47B65,
52A41, 90C25}

\noindent {\bfseries Keywords:}
Adjoint,
convex function,
convex set,
Fenchel conjugate,
Fitzpatrick function,
linear relation,
maximal monotone operator,
multifunction,
monotone operator,
set-valued operator,
symmetric.

\section{Introduction}
Monotone operators play important roles in convex analysis and optimization
\cite{BurIus,ph,Si,Si2,RockWets,Zalinescu,Zeidler}.
 In 1978, Br\'{e}zis-Browder gave some characterizations of a monotone
operator with closed graph (\cite[Theorem~2]{Brezis-Browder}). The Br\'{e}zis-Browder Theorem states
that a
 monotone
linear relation with closed graph is maximal monotone if and only if its adjoint is monotone
 if and only if its adjoint is maximal monotone,
which gives the connection  between the monotonicity of a linear relation and that of its adjoint.
Now we give a new and simpler proof
of the hard part of the Br\'{e}zis-Browder Theorem (Theorem~\ref{Sv:2}):  a
 monotone
linear relation with closed graph is maximal monotone if its adjoint is monotone.

We suppose throughout this note that
$X$ is  a real reflexive Banach space with norm $\|\cdot\|$,
 that $X^*$ is its continuous dual space with norm
$\|\cdot\|_*$, and dual product $\scal{\cdot}{\cdot}$. We now introduce some notation.
Let $A\colon X\To X^*$
be a \emph{set-valued operator} or \emph{multifunction} whose graph is defined by
\begin{align*}\gra A:= \menge{(x,x^*)\in X\times X^*}{x^*\in Ax}.\end{align*}
The \emph{inverse
operator}  of $A$, $A^{-1}\colon X^*\To X$, is  given by $\gra A^{-1} :=
\menge{(x^*,x)\in X^*\times X}{x^*\in Ax}$;
 the \emph{domain} of $A$ is $\dom A := \menge{x\in X}{Ax\neq\varnothing}$.

If $Z$ is a real reflexive Banach space with dual $Z^*$ and a set $S\subseteq Z$, we denote $S^\bot$ by
$S^\bot := \menge{z^*\in Z^*}{\langle z^*,s\rangle= 0,\quad \forall s\in S}$.
Then the \emph{adjoint} of $A$, denoted by $A^*$, is defined
by
\begin{equation*}
\gra A^* :=
\menge{(x,x^*)\in X\times X^*}{(x^*,-x)\in (\gra A)^\bot}.
\end{equation*}
Note that $A$ is said to be  a \emph{linear relation}
if $\gra A$ is a linear subspace of
$X\times X^*$. (See \cite{Cross}
 for further information on linear
relations.)
Recall that $A$ is said to be \emph{monotone} if for all $(x,x^*), (y,y^*)\in\gra
A$ we have
\begin{equation*}
\scal{x-y}{x^*-y^*}\geq 0,
\end{equation*}and $A$ is \emph{maximal monotone} if $A$ is monotone and
 $A$ has no proper monotone extension.
We say $(x,x^*)\in X\times X^*$ is \emph{monotonically related} to $\gra A$ if
(for every $(y,y^*)\in\gra A)$
$\scal{x-y}{x^*-y^*}\geq 0$.
Recently
linear relations have been become an interesting object and comprehensively studied
 in Monotone Operator Theory:
see \cite{BBBRW,BB,BBW,BWY2,BWY3,BWY4,PheSim,Svaiter,Voisei06,VZ}.
We can now precisely describe the Br\'{e}zis-Browder Theorem.
Let $A$ be a monotone linear relation with closed graph. Then
\begin{align*} A\;\text{is maximal monotone}&\Leftrightarrow A^*\;\text{ is 
maximal monotone}\\
&\Leftrightarrow A^*\;\text{ is monotone}.\end{align*}
Our goal of this paper is to give  a simpler proof of Br\'{e}zis-Browder Theorem
and to derive more properties of Fitzpatrick functions of order $n$.
The paper is organized as follows.
The first main result (Theorem~\ref{Sv:2}) is proved
in Section~\ref{s:main} providing a new and simpler proof of the
 Br\'{e}zis-Browder  Theorem.
 In Section~\ref{main:2}, some explicit formula
 for Fitzpatrick functions are given. Recently,
  \emph{Fitzpatrick functions of order $n$}~\cite{BBBRW} have turned out to be a useful
  tool
in the study of $n$-cyclic monotonicity
 (see \cite{BBBRW, BLW, BBW}). Theorem~\ref{Festival:2}
  gives an explicit formula for Fitzpatrick functions of order $n$ associated with
 symmetric linear relations, which generalizes and simplifies
 \cite[Example 4.4]{BBBRW} and \cite[Example 6.4]{BBW}.

Our notation is standard. The notation
$A\colon X\rightarrow X^*$ means that $A$ is a \emph{single-valued} mapping (with full domain)
from $X$ to $X^*$.
Given a subset $C$ of $X$, $\overline{C}$ is the closure of $C$.
The \emph{indicator function} $\iota_C:X\rightarrow\RX$ of $C$ is defined by
\begin{equation}x\mapsto\begin{cases}0,\;&\text{if}\;x\in C;\\
+\infty,\;&\text{otherwise}.\end{cases}\end{equation}
For a function  $f\colon X\to \RX$,
$\dom f= \{x\in X \mid f(x)<+\infty\}$ and
$f^*\colon X^*\to\RXX\colon x^*\mapsto\sup_{x\in X}(\scal{x}{x^*}-f(x))$ is
the \emph{Fenchel conjugate} of $f$.
Recall that $f$ is said to be
  proper if $\dom f\neq\varnothing$. If $f$ is convex,
   $\partial f\colon X\rightrightarrows X^*\colon x\mapsto \menge{x^*\in X^*}{(\forall y\in
X)\; \scal{y-x}{x^*} + f(x)\leq f(y)}$
is the
 \emph{subdifferential operator} of $f$.
Denote ${J}$ by the duality map, i.e.,
the subdifferential of the function $\tfrac{1}{2}\|\cdot\|^2$, by \cite[Example~2.26]{ph},
\begin{align*}
Jx:=\{x^*\in X^*\mid\langle x^*,x\rangle=\|x^*\|_*\cdot\|x\|,\;\text{with $\|x^*\|_*=\|x\|$}\}.\end{align*}

\section{New proof of the Br\'{e}zis-Browder Theorem}
\label{s:main}

\begin{fact}[Simons]
\emph{(See \cite[Lemma~19.7 and Section~22]{Si2}.)}
\label{f:referee}
Let $A:X\To X^*$ be a monotone linear relation such that $\gra A
\neq\varnothing$.
Then the function
\begin{equation}
g\colon X\times X^* \to \RX\colon
(x,x^*)\mapsto \scal{x}{x^*} + \iota_{\gra A}(x,x^*)
\end{equation}
is proper and convex.
\end{fact}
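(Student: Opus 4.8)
The plan is to verify the two assertions separately, with convexity being the substantive one. For properness, I would simply note that $g$ never takes the value $-\infty$: its quadratic part $\scal{x}{x^*}$ is real-valued and the indicator contributes only $0$ or $+\infty$. Moreover, the hypothesis $\gra A\neq\varnothing$ supplies a point $(a,a^*)\in\gra A$ at which $g(a,a^*)=\scal{a}{a^*}\in\RR$; hence $\dom g\neq\varnothing$ and $g$ is proper.

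For convexity, the key observation is that although the quadratic map $(x,x^*)\mapsto\scal{x}{x^*}$ fails to be convex on all of $\ZZZ$, its restriction to the subspace $\gra A$ is convex \emph{precisely because} $A$ is monotone. Concretely, I would fix $(x,x^*),(y,y^*)\in\ZZZ$ and $\lambda\in[0,1]$ and check the convexity inequality for $g$. If either point lies outside $\gra A$, the right-hand side of that inequality is $+\infty$ and there is nothing to prove, so I may assume both points lie in $\gra A$; since $\gra A$ is a linear subspace it is convex, so the convex combination $\lambda(x,x^*)+(1-\lambda)(y,y^*)$ again lies in $\gra A$ and every indicator term vanishes.

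It then remains to establish $\scal{\lambda x+(1-\lambda)y}{\lambda x^*+(1-\lambda)y^*}\leq\lambda\scal{x}{x^*}+(1-\lambda)\scal{y}{y^*}$. Expanding the bilinear pairing on the left and subtracting it from the right, a short computation shows that the difference (right-hand side minus left-hand side) equals $\lambda(1-\lambda)\scal{x-y}{x^*-y^*}$. This is the whole point: the cross terms assemble exactly into the monotonicity pairing. Since $\lambda(1-\lambda)\geq 0$ and, by monotonicity of $A$, $\scal{x-y}{x^*-y^*}\geq 0$, the required inequality follows, and therefore $g$ is convex.

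I expect no serious obstacle here; the only things to get right are the algebraic identity exhibiting $\lambda(1-\lambda)\scal{x-y}{x^*-y^*}$ as the defect in the convexity inequality, and the (trivial but essential) use of linearity of $\gra A$ to ensure that convex combinations remain in the domain. The underlying structural fact worth flagging is that monotonicity of a linear relation is nothing other than positive semidefiniteness of the symmetric bilinear form $\big((x,x^*),(y,y^*)\big)\mapsto\tfrac12\big(\scal{x}{y^*}+\scal{y}{x^*}\big)$ restricted to $\gra A$, and convexity of a quadratic form is equivalent to exactly such positive semidefiniteness.
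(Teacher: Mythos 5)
Your proof is correct. Note that the paper does not actually prove this statement: it is recorded as a \emph{Fact} with a citation to Simons' book (\cite[Lemma~19.7 and Section~22]{Si2}), so there is no in-paper argument to compare against. Your self-contained verification is the standard one and it is complete: properness follows from $\gra A\neq\varnothing$ together with the fact that $g$ never takes the value $-\infty$, and for convexity your key identity
\[
\lambda\scal{x}{x^*}+(1-\lambda)\scal{y}{y^*}-\scal{\lambda x+(1-\lambda)y}{\lambda x^*+(1-\lambda)y^*}
=\lambda(1-\lambda)\scal{x-y}{x^*-y^*}
\]
checks out by direct expansion of the bilinear pairing, and the right-hand side is nonnegative for $(x,x^*),(y,y^*)\in\gra A$ by monotonicity, while linearity of $\gra A$ keeps the convex combination inside the domain. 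Your closing remark --- that monotonicity of a linear relation is exactly positive semidefiniteness of the symmetrized pairing on $\gra A$, which is exactly convexity of the quadratic form there --- is the right structural way to see why the computation had to work. This is essentially the argument one finds in Simons, so the only difference from the paper is that you supply the proof rather than outsource it.
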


\begin{fact}[Simons-Z{\u{a}}linescu]\emph{(See \cite[Theorem~1.2]{SimZal}.)}\label{SV:1}
 Let  $A\colon X \rightrightarrows X^*$ be monotone. Then $A$ is maximal monotone
  if and only if $$\gra A+\gra (-{J})=X\times X^*.$$
\end{fact}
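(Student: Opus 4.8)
The plan is to handle the two implications separately, since they are of entirely different natures: the passage from the decomposition property to maximal monotonicity is an elementary computation resting on the defining property of the duality map, whereas the converse is the genuine surjectivity statement (a Rockafellar-type theorem) and carries all of the analytic weight.

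For the implication $\gra A + \gra(-J) = X\times X^* \Rightarrow A$ maximal monotone, I would proceed as follows. Let $(a,a^*)$ be monotonically related to $\gra A$; the goal is to show $(a,a^*)\in\gra A$. Using the hypothesis, write $(a,a^*) = (x,x^*) + (b,b^*)$ with $(x,x^*)\in\gra A$ and $(b,b^*)\in\gra(-J)$, so that $-b^*\in Jb$ and hence $\scal{b}{b^*} = -\|b\|^2$ with $\|b^*\|_* = \|b\|$. Applying monotone-relatedness to the particular pair $(x,x^*)$ and using $a-x = b$, $a^*-x^* = b^*$ yields $\scal{b}{b^*}\ge 0$; combined with $\scal{b}{b^*} = -\|b\|^2\le 0$ this forces $b = 0$, whence $b^* = 0$ and $(a,a^*) = (x,x^*)\in\gra A$. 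Thus no point outside $\gra A$ can be monotonically related to it, i.e.\ $A$ is maximal monotone.

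For the converse, assume $A$ is maximal monotone and fix a target $(y,y^*)\in X\times X^*$. Since $J$ is odd, a short rearrangement shows that producing the required decomposition of $(y,y^*)$ is equivalent to solving $y^* \in Ax + J(x-y)$ for some $x$, i.e.\ to $0\in\ran(B+J)$ where $B\colon x\mapsto A(x+y)-y^*$ is the (still maximal monotone) translate of $A$. I would establish this surjectivity either by first renorming $X$ so that $X$ and $X^*$ are strictly convex and smooth---making $J$ single-valued, strictly monotone and bijective---and then running a Minty/Browder--Minty existence argument, or, following Simons--Z{\u{a}}linescu, by a Fenchel-duality minimax argument applied to the proper convex function $g$ of Fact~\ref{f:referee} and its conjugate. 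The main obstacle is exactly this existence step: the backward direction is bookkeeping, but the forward direction is the full Rockafellar surjectivity theorem, and all of the delicacy---whether one pays for it through a nontrivial renorming plus a surjectivity theorem, or through a self-contained convex-analytic duality---is concentrated there.
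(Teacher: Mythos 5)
First, note that the paper itself offers no proof of this statement: it is imported verbatim as Fact~\ref{SV:1} with a citation to Simons--Z\u{a}linescu, so there is nothing internal to compare against and any proof you supply is necessarily extra material.

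Your backward implication is complete and correct. From $(b,b^*)\in\gra(-J)$ you correctly extract $\scal{b}{b^*}=-\|b\|^2$ with $\|b^*\|_*=\|b\|$, and testing monotone relatedness of $(a,a^*)$ against the single point $(x,x^*)$ forces $b=0$, hence $b^*=0$ and $(a,a^*)\in\gra A$. That direction needs nothing more.

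The forward implication, however, contains a genuine gap, and one of the two routes you propose would not even prove the stated result. The reduction to $0\in\ran(B+J)$ for the translate $B$ is fine (and your use of the oddness of $J$ is legitimate), but if you then renorm $X$ so that $J$ becomes single-valued and bijective, the duality map changes with the norm: a Minty--Browder argument in the new norm yields $\gra A+\gra(-J_{\mathrm{new}})=X\times X^*$, which is not the identity asserted for the original, possibly multivalued $J$. This is precisely the point of the Simons--Z\u{a}linescu paper, and of the Remark following Fact~\ref{SV:1}: Rockafellar's classical characterization is the special case in which $J$ and $J^{-1}$ are single-valued, whereas the Fact is claimed for an arbitrary equivalent norm. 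Your second route --- the Fenchel-duality/minimax argument applied to the function $g$ of Fact~\ref{f:referee} --- is the correct one and is norm-independent, but you only name it; the actual work (showing the relevant value is nonnegative, producing the dual element, and converting it into the decomposition) is not carried out. As written, the surjectivity half of the equivalence, which you yourself identify as carrying all the analytic weight, remains unproved.
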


\begin{remark} When $J$ and $J^{-1}$ are single-valued, Fact~\ref{SV:1} yields
Rockafellar's characterization of maximal monotonicity of $A$. See \cite[Theorem~1.3]{SimZal}
and \cite[Theorem~29.5 and Remark~29.7]{Si2}.
\end{remark}

Now we state the Br\'{e}zis-Browder Theorem.
\begin{theorem}[Br\'ezis-Browder] \emph{(See \cite[Theorem~2]{Brezis-Browder}.)}
\label{Sv:7}
Let  $A\colon X \rightrightarrows X^*$ be a linear relation with closed graph.
 Then the following statements are equivalent.
\begin{enumerate}
 \item $A$ is maximal monotone.
\item  $A^*$ is maximal  monotone.
\item   $A^*$ is monotone.
\end{enumerate}
\end{theorem}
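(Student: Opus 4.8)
The plan is to prove the three-way equivalence by isolating the one substantial implication, (iii)$\Rightarrow$(i), which is exactly the new content of Theorem~\ref{Sv:2}, and to obtain everything else from it together with one elementary observation. Throughout I use that $A$ is monotone (the standing hypothesis of the Br\'ezis-Browder setting; note that without it (iii) does not imply (i)), that $\gra A^*$ is always closed, and that $A^{**}=A$ since $\gra A$ is closed and $X$ is reflexive.

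First, the two cheap facts. The implication (ii)$\Rightarrow$(iii) is immediate, since a maximal monotone operator is monotone. For (i)$\Rightarrow$(iii) I would argue directly. Fix $(b,b^*)\in\gra A^*$, so that $\scal{a}{b^*}=\scal{b}{a^*}$ for every $(a,a^*)\in\gra A$. If $b\in\dom A$, pick $b^\dagger\in Ab$; taking $(a,a^*)=(b,b^\dagger)$ gives $\scal{b}{b^*}=\scal{b}{b^\dagger}\geq 0$ by monotonicity of $A$. Hence if some $(b,b^*)\in\gra A^*$ had $\scal{b}{b^*}<0$, then necessarily $b\notin\dom A$, and for every $(a,a^*)\in\gra A$ one computes $\scal{b-a}{-b^*-a^*}=-\scal{b}{b^*}+\scal{a}{a^*}\geq-\scal{b}{b^*}>0$, using $\scal{a}{b^*}=\scal{b}{a^*}$. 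Thus $(b,-b^*)$ is monotonically related to $\gra A$ but cannot lie in $\gra A$ (else $b\in\dom A$), contradicting maximality of $A$; therefore $A^*$ is monotone.

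The heart is (iii)$\Rightarrow$(i): assuming $A$ monotone with closed graph and $A^*$ monotone, show $A$ is maximal monotone. Here I would invoke Fact~\ref{SV:1} and reduce maximality to the Simons--Z\u{a}linescu range identity $\gra A+\gra(-J)=\ZZZ$. Fixing a target $(v,v^*)$, the idea is to minimise over $(a,a^*)\in\gra A$ a coercive convex function built from $g(a,a^*)=\scal{a}{a^*}+\iota_{\gra A}(a,a^*)$ -- proper and convex by Fact~\ref{f:referee}, and lower semicontinuous because $\gra A$ is closed -- together with the quadratic penalties $\tfrac12\|v-a\|^2$ and $\tfrac12\|v^*-a^*\|_*^2$, whose subdifferentials are governed by the duality map $J$. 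Reflexivity of $X$ and weak lower semicontinuity guarantee a minimiser. Writing the first-order optimality condition and using that the normal cone to the subspace $\gra A$ is its annihilator $(\gra A)^\perp$, which under the rotation $(p,q)\mapsto(-q,p)$ is precisely $\gra A^*$, the optimality condition deposits a pair in $\gra A^*$. Feeding this pair into the hypothesis that $A^*$ is monotone yields an inequality that forces the residual terms to vanish; this is exactly the decomposition $(v,v^*)\in\gra A+\gra(-J)$, and Fact~\ref{SV:1} then gives maximal monotonicity of $A$.

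With (iii)$\Rightarrow$(i) in hand, the remaining implications assemble quickly. Applying it to $A^*$ -- which is monotone by (iii), has closed graph, and satisfies $(A^*)^*=A$ monotone -- yields that $A^*$ is maximal monotone, i.e. (iii)$\Rightarrow$(ii). Combining, (iii) implies both (i) and (ii), while (i)$\Rightarrow$(iii) was shown above and (ii)$\Rightarrow$(iii) is trivial, so the three statements are equivalent. I expect the main obstacle to be the core implication (iii)$\Rightarrow$(i): the existence of the minimiser is routine, but extracting the exact Simons--Z\u{a}linescu decomposition from the optimality condition -- and in particular pinpointing how the monotonicity of $A^*$ certifies that the pair produced in $\gra A^*$ makes the residual vanish -- is the delicate step where the promised simplification of the original Br\'ezis-Browder argument must be carried out.
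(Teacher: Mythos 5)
Your overall architecture matches the paper's: the easy implication (i)$\Rightarrow$(iii) via a monotonically-related pair built from an adjoint element with $\scal{b}{b^*}<0$, the reduction of (iii)$\Rightarrow$(i) to the Simons--Z\u{a}linescu criterion of Fact~\ref{SV:1} by minimizing a coercive convex function whose optimality condition lands a pair in $\gra A^*$, and the remaining equivalences via $A^{**}=A$. Your computation for (i)$\Rightarrow$(iii) is correct (it is the paper's argument with the sign placed on the second coordinate instead of the first), and your remark that the theorem needs the standing hypothesis that $A$ itself is monotone is a good catch: as stated, (iii) does not imply (i) (take $\gra A=X\times X^*$, so that $\gra A^*=\{(0,0)\}$ is trivially monotone while $A$ is not).

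The gap is in the objective function you propose for the core step. You minimize $\scal{a}{a^*}+\tfrac12\|v-a\|^2+\tfrac12\|v^*-a^*\|_*^2$ over $(a,a^*)\in\gra A$, i.e.\ you couple $a$ with $a^*$ while penalizing the residuals separately. The correct coupling must be between the \emph{residuals}: the paper's $g$, after the substitution $(y,y^*)=(v,v^*)-(a,a^*)$ and using $\gra A=-\gra A$, is $\tfrac12\|v-a\|^2+\tfrac12\|v^*-a^*\|_*^2+\scal{v-a}{v^*-a^*}+\iota_{\gra A}(a,a^*)$, whose minimal value is what must be shown to be $\le 0$ (hence $=0$ by Fenchel--Young), yielding $v^*-a^*\in -J(v-a)$. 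With your objective the cross terms $-\scal{v}{a^*}-\scal{a}{v^*}$ are missing, the minimizer is simply wrong, and the monotonicity of $A^*$ gives an inequality of the wrong shape. Concretely, take $X=\RR$, $A=\Id$, $(v,v^*)=(1,0)$: your objective is $t^2+\tfrac12(1-t)^2+\tfrac12 t^2$, minimized at $t=\tfrac14$, and $(v,v^*)-(\tfrac14,\tfrac14)=(\tfrac34,-\tfrac14)\notin\gra(-J)$; the pair the optimality condition deposits in $\gra A^*$ is $(\tfrac12,\tfrac12)$, for which monotonicity of $A^*$ gives the strict inequality $\tfrac14>0$ and forces nothing to vanish. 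The paper's choice of $g$ is engineered precisely so that the adjoint-monotonicity inequality $\scal{z+v}{z^*+v^*}\le 0$ decomposes into a sum of two Fenchel--Young expressions, each $\ge 0$, whence both vanish; replacing the coupling term destroys this cancellation, so the delicate step you flag would not go through as you have set it up.
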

\begin{proof}
(i)$\Rightarrow$(iii): Suppose to the contrary that $A^*$ is not monotone.
Then there exists $(x_0,x_0^*)\in\gra A^*$ such that $\langle x_0,x_0^*\rangle<0$.
Now we have
\begin{align}&\langle-x_0-y,x^*_0-y^*\rangle
=\langle-x_0,x^*_0\rangle+\langle y,y^*\rangle+\langle x_0,y^*\rangle
+\langle -y,x_0^*\rangle\nonumber\\
&=\langle-x_0,x^*_0\rangle+\langle y,y^*\rangle>0,
\quad \forall (y,y^*)\in\gra A.\label{Brezi:1}\end{align}
Thus, $(-x_0,x^*_0)$ is monotonically related to $\gra A$.
 By maximal monotonicity of $A$, $(-x_0,x^*_0)\in\gra A$.
Then $\langle-x_0-(-x_0),x^*_0-x_0^*\rangle=0$,
 which contradicts \eqref{Brezi:1}. Hence $A^*$ is monotone.

The hard parter is to show (iii)$\Rightarrow$(i). See Theorem~\ref{Sv:2} below.

(i)$\Leftrightarrow$(ii): Apply directly (iii)$\Leftrightarrow$(i)
 by using $A^{**}=A$ (since $\gra A$ is closed).
\end{proof}

In Theorem~\ref{Sv:2}, we provide a new and simpler proof to
 show the hard part (iii)$\Rightarrow$(i) in  Theorem~\ref{Sv:7}.
  The proof  was inspired by that of \cite[Theorem~32.L]{Zeidler}.

\begin{theorem}\label{Sv:2}
Let  $A\colon X \rightrightarrows X^*$ be a  monotone linear
 relation with closed graph. Suppose $A^*$ is monotone. Then $A$ is maximal monotone.
\end{theorem}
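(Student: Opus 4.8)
The plan is to verify the Simons--Z{\u{a}}linescu criterion (Fact~\ref{SV:1}) by showing $\gra A+\gra(-J)=X\times X^*$. Fixing an arbitrary $(x,x^*)\in X\times X^*$, producing the required decomposition amounts to finding $(\bar a,\bar a^*)\in\gra A$ with $\bar a^*-x^*\in J(x-\bar a)$, since then $(x,x^*)=(\bar a,\bar a^*)+\big(x-\bar a,\,x^*-\bar a^*\big)$ with $x^*-\bar a^*\in -J(x-\bar a)$, exhibiting $(x,x^*)$ as an element of $\gra A+\gra(-J)$.

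To produce such a point I would minimize over $X\times X^*$ the function
\[
h(a,a^*):=\tfrac12\|x-a\|^2+\tfrac12\|a^*-x^*\|_*^2-\scal{x-a}{a^*-x^*}+\iota_{\gra A}(a,a^*).
\]
By the Fenchel--Young inequality $h\ge 0$, with $h(a,a^*)=0$ precisely when $(a,a^*)\in\gra A$ and $a^*-x^*\in J(x-a)$; thus it suffices to produce a minimizer at which $h$ vanishes. The first point to check is convexity of $h$: expanding the bilinear term writes $h$ as the sum of the continuous convex maps $\tfrac12\|x-\cdot\|^2$ and $\tfrac12\|\cdot-x^*\|_*^2$, an affine term, and $\scal{a}{a^*}+\iota_{\gra A}$, and convexity of this last piece is exactly Fact~\ref{f:referee}. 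Since $(0,0)\in\gra A$ and $A$ is monotone we have $\scal{a}{a^*}\ge 0$ on $\gra A$, so the quadratic terms dominate the linear ones and $h$ is coercive; as $\gra A$ is closed, $h$ is also weakly lower semicontinuous. Reflexivity then guarantees a minimizer $(\bar a,\bar a^*)\in\gra A$.

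The crux is to show $h(\bar a,\bar a^*)=0$, and this is where monotonicity of $A^*$ enters. Writing $h=p+g$ with $p$ the everywhere-continuous smooth-plus-affine part and $g=\scal{\cdot}{\cdot}+\iota_{\gra A}$, the Moreau--Rockafellar sum rule gives $0\in\partial p(\bar a,\bar a^*)+\partial g(\bar a,\bar a^*)$. Computing $\partial p$ separably produces elements $j^*\in -J(x-\bar a)$ and $j\in J^{-1}(\bar a^*-x^*)$, while exploiting that $\gra A$ is a subspace turns the subgradient inequality for $g$ into an exact orthogonality relation, namely the membership $\big(x-\bar a-j,\;(\bar a^*-x^*)+j^*\big)\in\gra A^*$. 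Monotonicity of $A^*$, together with $(0,0)\in\gra A^*$, then yields $\scal{x-\bar a-j}{(\bar a^*-x^*)+j^*}\ge 0$. Setting $u:=x-\bar a$ and $w^*:=\bar a^*-x^*$ and inserting the duality-map identities $\scal{u}{-j^*}=\|u\|^2=\|j^*\|_*^2$ and $\scal{j}{w^*}=\|j\|^2=\|w^*\|_*^2$, this inequality collapses, after one application of Cauchy--Schwarz to the term $\scal{j}{j^*}$, to $(\|u\|-\|w^*\|_*)^2\le 0$; hence every estimate must be an equality and $w^*\in Ju$, which is exactly $h(\bar a,\bar a^*)=0$. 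I expect this final step---extracting an element of $\gra A^*$ from the optimality condition and then forcing the Fenchel--Young gap to close---to be the main obstacle, since it is the only place the hypothesis on $A^*$ is used and it requires handling the (possibly multivalued) duality map with care.
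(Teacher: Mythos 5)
Your proposal is correct and follows essentially the same route as the paper: after an affine change of variables your functional $h$ is exactly the paper's $g$, and both arguments verify the Simons--Z\u{a}linescu criterion by minimizing this convex, coercive, lower semicontinuous function, reading the optimality condition (via Fact~\ref{f:referee} and the sum rule) as membership of a certain point in $\gra A^*$, and using monotonicity of $A^*$ to force the Fenchel--Young gap to close. The only differences are cosmetic, e.g.\ your single Cauchy--Schwarz step versus the paper's grouping into two nonnegative Fenchel--Young brackets.
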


\begin{proof}
 We show that $X\times X^*\subseteq\gra A+\gra (-{J})$.
   Let $(x,x^*)\in X\times X^*$ and we define $g:X\times X^*\rightarrow\RX$ by
\begin{align*}(y,y^*)\mapsto\tfrac{1}{2}\|y^*\|^2_*
+\tfrac{1}{2}\| y\|^2+\langle y^*,y\rangle+\iota_{\gra A}(y-x,y^*-x^*).
\end{align*}
Since $\gra A$ is closed, $g$ is lower semicontinuous on $X\times X^*$.
By Fact~\ref{f:referee}, $g$ is  convex and coercive.
 Here $g$ has minimizer. Suppose that $(z,z^*)$ is a minimizer of $g$.
Then $(z-x,z^*-x^*)\in\gra A$, that is,
\begin{align}(x,x^*)\in\gra A+(z,z^*).\label{SL:1}\end{align}
On the other hand,  since $(z,z^*)$ is a minimizer of $g$, $(0,0)\in\partial g(z,z^*)$.
By a result of Rockafellar (see \cite[Theorem~2.9.8]{Clarke}),
 there exist $(z_0^*, z_0)
\in\partial (\iota_{\gra A}(\cdot-x,\cdot-x^*))(z,z^*)=
\partial \iota_{\gra A}(z-x,z^*-x^*)
=(\gra A)^{\bot}$, and $(v,v^*)\in X\times X^*$  with $v^*\in {J}z, z^*\in {J}v$ such that
$$(0,0)=(z^*,z)+(v^*,v)                                         +(z_0^*, z_0).$$
Then
\begin{align*}\big(-(z+v), z^*+v^*\big)\in\gra A^{*}.\end{align*}
Since $A^*$ is monotone,
\begin{align}\langle z^*+v^*,z+v\rangle=\langle z^*, z\rangle
+\langle z^*,v\rangle+\langle v^*,z\rangle+\langle v^*,v\rangle\leq 0.\label{SS:2}\end{align}
Note that since $\langle z^*,v\rangle=\|z^*\|_*^2=\|v\|^2,\;\langle v^*,z\rangle=\|v^*\|_*^2=\|z\|^2$,
 by \eqref{SS:2}, we have
$$\tfrac{1}{2}\|z\|^2+\tfrac{1}{2}\|z^*\|_*^2+\langle z^*, z\rangle
+\tfrac{1}{2}\|v^*\|^2_*+\tfrac{1}{2}\|v\|^2+\langle v,v^*\rangle\leq 0.$$
Hence $z^*\in-{J}z$. By \eqref{SL:1}, $(x,x^*)\in\gra A+\gra (-{J})$.
Thus, $X\times X^*\subseteq \gra (-{J})+\gra A$.
   By Fact~\ref{SV:1}, $A$ is maximal monotone.
\end{proof}

\section{Fitzpatrick functions and Fitzpatrick functions of order $n$}\label{main:2}
Now we introduce some  properties of monotone linear relations.

\begin{fact}\emph{(See \cite{BWY3}.)}\label{linear}
Assume that $A:X\To X^*$ is a monotone linear relation. Then
the following hold.
\begin{enumerate}
\item\label{Nov:s2}
The function
$\dom A\rightarrow\RR: y\mapsto \langle y, Ay\rangle$
is convex.
\item\label{Sia:2c} $\dom A \subseteq (A0)^{\perp}$.  For every $x\in (A0)^{\perp}$, the function
$\dom A\rightarrow\RR: y\mapsto \langle x, Ay\rangle$
is linear.

\end{enumerate}
\end{fact}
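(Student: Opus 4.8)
The plan is to establish the inclusion in part \ref{Sia:2c} \emph{first}, because it is precisely what makes the expressions $\langle y,Ay\rangle$ and $\langle x,Ay\rangle$ single-valued, and hence renders the assertions in \ref{Nov:s2} and \ref{Sia:2c} meaningful at all. Since $\gra A$ is a linear subspace, $(0,0)\in\gra A$ and, for any $w^*\in A0$, the entire line $(0,tw^*)$, $t\in\RR$, lies in $\gra A$. Fixing $y\in\dom A$ together with a selection $y^*\in Ay$, monotonicity of $A$ applied to the pairs $(y,y^*)$ and $(0,tw^*)$ gives $\langle y,y^*\rangle-t\langle y,w^*\rangle\geq 0$ for every $t\in\RR$; letting $t\to\pm\infty$ forces $\langle y,w^*\rangle=0$. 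As $w^*\in A0$ was arbitrary, $y\in(A0)^{\perp}$, which is the inclusion $\dom A\subseteq (A0)^{\perp}$.

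Next I would record well-definedness. If $y_1^*,y_2^*\in Ay$, then $y_1^*-y_2^*\in A0$ by linearity of the relation, so pairing with any $x\in(A0)^{\perp}$ yields $\langle x,y_1^*\rangle=\langle x,y_2^*\rangle$; in particular, taking $x=y$ (legitimate by the inclusion just proved), the number $\langle y,Ay\rangle$ is independent of the chosen selection. The linearity claim in \ref{Sia:2c} is then immediate from linearity of $\gra A$: for $y_1,y_2\in\dom A$ with selections $y_1^*,y_2^*$ and scalars $\alpha,\beta$, the element $\alpha y_1^*+\beta y_2^*$ is a selection of $A(\alpha y_1+\beta y_2)$, whence $\langle x,A(\alpha y_1+\beta y_2)\rangle=\alpha\langle x,Ay_1\rangle+\beta\langle x,Ay_2\rangle$.

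For the convexity in \ref{Nov:s2} the idea is to collapse everything into one monotonicity inequality by means of an exact ``second-difference'' identity. Writing $q(y):=\langle y,Ay\rangle$, I would, given $y_1,y_2\in\dom A$ with selections $y_1^*,y_2^*$ and $\lambda\in[0,1]$, use the selection $\lambda y_1^*+(1-\lambda)y_2^*$ at the point $\lambda y_1+(1-\lambda)y_2$ and expand the bilinear pairing. After cancellation this produces
\begin{equation*}
\lambda q(y_1)+(1-\lambda)q(y_2)-q\big(\lambda y_1+(1-\lambda)y_2\big)
=\lambda(1-\lambda)\,\langle y_1-y_2,\,y_1^*-y_2^*\rangle,
\end{equation*}
whose right-hand side is $\geq 0$ by monotonicity of $A$. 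This is exactly Jensen's inequality for two points, so $q$ is convex.

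The only genuinely delicate point is the well-definedness issue: a priori $\langle y,Ay\rangle$ is a set-valued assignment, and convexity of such an object is meaningless. This is the reason the inclusion $\dom A\subseteq(A0)^{\perp}$ must be secured before anything else; once it is in hand, the remainder is the elementary algebra of the bilinear form combined with the defining monotonicity inequality, and I expect no further obstacle.
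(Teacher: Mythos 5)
Your proof is correct. Note that the paper itself gives no argument for this Fact --- it simply defers to \cite{BWY3} (Propositions~2.2 and~2.3 there) --- so there is nothing in the text to compare against line by line; but your argument is complete and is essentially the standard one found in the cited reference: the two-sided monotonicity test against the line $(0,tw^*)\subseteq\gra A$ yields $\dom A\subseteq(A0)^{\perp}$ and hence single-valuedness of $y\mapsto\scal{x}{Ay}$, and the exact identity
\begin{equation*}
\lambda q(y_1)+(1-\lambda)q(y_2)-q\big(\lambda y_1+(1-\lambda)y_2\big)=\lambda(1-\lambda)\scal{y_1-y_2}{y_1^*-y_2^*}\geq 0
\end{equation*}
gives convexity. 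Your decision to settle well-definedness before stating convexity is exactly the right order of business, and no step is missing.
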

\begin{proof}
\ref{Nov:s2}: See \cite[Proposition~2.3]{BWY3}.
\ref{Sia:2c}: See \cite[Proposition~2.2(i)(iii)]{BWY3}.
\end{proof}

\begin{definition}
Suppose $A:X\rightrightarrows X^*$ is a monotone linear relation.
 We say $A$ is \emph{symmetric} if $\langle Ax,y\rangle=\langle Ay,x\rangle,\quad
\forall x,y\in\dom A.$\end{definition}

For a monotone linear relation $ A\colon X \To X^*$ it will be convenient
to define (as in, e.g., \cite{BBW})
\begin{equation} \label{e:diequad}
q_A \colon X\to\RR\colon x\mapsto\begin{cases} \thalb \scal{x}{Ax},&\text{if}\; x\in\dom A;\\
\infty,&\text{otherwise}.\end{cases}
\end{equation}
By Fact~\ref{linear}\ref{Nov:s2}, $q_A$ is at most single-valued and convex.

The following generalizes a result of Phelps-Simons
(see \cite[Theorem 5.1] {PheSim}) from symmetric monotone linear operators
 to symmetric monotone linear
relations. We write $\overline{f}$ for the lower semicontinuous hull of $f$.
\begin{proposition} \label{f:PheSim} Let $A:X\rightrightarrows X^*$
 be a monotone symmetric linear relation.
Then
\begin{enumerate}
\item \label{f:PheSim:lsc02} $q_{A}$ is convex, and $\overline{q_A}+\iota_{\dom A}=q_A$.
\item \label{f:PheSim:quad}
$\gra A \subseteq\gra \partial \overline{q_A}$. If $A$ is maximal monotone, then $A=\partial \overline{q_A}$.
\end{enumerate}
\end{proposition}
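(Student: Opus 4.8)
The plan is to reduce both parts to a single computation, namely that every $(x,x^*)\in\gra A$ satisfies $x^*\in\partial q_A(x)$; once this is in hand, (i) and (ii) follow from standard facts about lower semicontinuous hulls and subdifferentials. Before starting I would record that $q_A$ is convex by Fact~\ref{linear}\ref{Nov:s2} (the map $y\mapsto\langle y,Ay\rangle$ is convex on the subspace $\dom A$), and that, since $\dom A\subseteq(A0)^{\perp}$ by Fact~\ref{linear}\ref{Sia:2c}, each pairing $\langle x,Ay\rangle$ with $x,y\in\dom A$ is independent of the representative chosen in $Ay$; in particular $q_A$ is single-valued and the symmetry identity $\langle x,Ay\rangle=\langle y,Ax\rangle$ is meaningful.

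The central step is the inclusion $\gra A\subseteq\gra\partial q_A$. Fixing $(x,x^*)\in\gra A$ and an arbitrary $(y,y^*)\in\gra A$, I would expand the monotonicity inequality $\langle x-y,x^*-y^*\rangle\geq 0$ and use symmetry to identify the two cross terms $\langle x,y^*\rangle=\langle x,Ay\rangle$ and $\langle y,x^*\rangle=\langle y,Ax\rangle=\langle x,Ay\rangle$, which collapses the inequality to $q_A(x)+q_A(y)\geq\langle x,Ay\rangle$. Rewriting the candidate subgradient inequality $q_A(y)\geq q_A(x)+\langle y-x,x^*\rangle$ via $\langle x,x^*\rangle=\langle x,Ax\rangle=2q_A(x)$ and $\langle y,x^*\rangle=\langle x,Ay\rangle$ reduces its right-hand side to $\langle x,Ay\rangle-q_A(x)$, so the inequality to be proved is precisely $q_A(x)+q_A(y)\geq\langle x,Ay\rangle$, already established; for $y\notin\dom A$ it holds trivially. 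Hence $x^*\in\partial q_A(x)$.

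For part (i), the inclusion just proved shows $Ax\subseteq\partial q_A(x)\neq\emptyset$ for every $x\in\dom A$, so $q_A$ admits a continuous affine minorant; consequently $\overline{q_A}$ is proper, and at every point with nonempty subdifferential the lsc hull agrees with the function, giving $\overline{q_A}(x)=q_A(x)$ for $x\in\dom A$. Since both $\overline{q_A}+\iota_{\dom A}$ and $q_A$ equal $+\infty$ off $\dom A$, this is exactly $\overline{q_A}+\iota_{\dom A}=q_A$. For the first assertion of part (ii), I would invoke the general fact that $\partial q_A\subseteq\partial\overline{q_A}$ for a proper convex function (an affine function minorizing $q_A$ and touching it at $x$ also minorizes $\overline{q_A}$ and touches it there), whence $\gra A\subseteq\gra\partial q_A\subseteq\gra\partial\overline{q_A}$. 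Finally, if $A$ is maximal monotone, then $\partial\overline{q_A}$ is a monotone operator extending $A$, and maximality forces $A=\partial\overline{q_A}$.

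The only delicate point is the bookkeeping in the central computation: one must ensure that every pairing $\langle x,Ay\rangle$ is well defined and that symmetry is applied to the correct cross terms, which is exactly where $\dom A\subseteq(A0)^{\perp}$ (Fact~\ref{linear}\ref{Sia:2c}) enters. The remaining ingredients (properties of the lsc hull, monotonicity of subdifferentials, and the absence of proper monotone extensions of a maximal monotone operator) are standard.
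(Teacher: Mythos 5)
Your proposal is correct and follows essentially the same route as the paper: the same monotonicity-plus-symmetry computation yielding $q_A(x)+q_A(y)\geq\langle x,Ay\rangle$ is the engine, and your packaging of the consequences via ``nonempty subdifferential forces $\overline{q_A}=q_A$ at that point'' and ``$\partial q_A\subseteq\partial\overline{q_A}$'' is just a slightly more modular phrasing of the paper's direct passage to the lower semicontinuous hull. No gaps.
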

\begin{proof}
Let $x\in\dom A$.

\ref{f:PheSim:lsc02}: Since $A$ is monotone, $q_{A}$ is convex. Let $y\in\dom A$.
Since  $A$ is monotone, by Fact~\ref{linear}\ref{Sia:2c},
\begin{align}0\leq\tfrac{1}{2}\langle Ax-Ay,x-y\rangle=
\tfrac{1}{2}\langle Ay,y\rangle+\tfrac{1}{2}\langle Ax,x\rangle-\langle Ax, y\rangle,
\label{Bor:u2}\end{align}
we have $q_A(y)\geq\langle Ax, y\rangle-q_A(x)$.
 Take lower semicontinuous hull and then deduce that $\overline{q_A}(y)\geq\langle Ax, y\rangle-q_A(x)$.
  For $y=x$, we have $\overline{q_A}(x)\geq q_A(x)$. On the other hand,
  $\overline{q_A}(x)\leq q_A(x)$.
Altogether, $\overline{q_A}(x)=q_A(x)$.
Thus \ref{f:PheSim:lsc02} holds.

\ref{f:PheSim:quad}:
Let $y\in\dom A$. By \eqref{Bor:u2} and \ref{f:PheSim:lsc02},
\begin{align}q_A(y)\geq
q_A(x)+\langle Ax,y-x\rangle=\overline{q_A}(x)+\langle Ax,y-x\rangle.\label{Bor:u3}
\end{align}
Since $\dom\overline{q_A}\subseteq\overline{\dom q_A}=\overline{\dom A}$, by \eqref{Bor:u3},
 $\overline{q_A}(z)\geq
\overline{q_A}(x)+\langle Ax,z-x\rangle,\quad \forall z\in\dom\overline{q_A}.$
Hence $Ax\subseteq\partial \overline{q_A}(x)$.
If $A$ is maximal monotone, $A=\partial \overline{q_A}$.
Thus \ref{f:PheSim:quad} holds.
\end{proof}

\begin{definition}
Let $A\colon X\rightrightarrows X^*.$ The
\emph{Fitzpatrick function} of $A$ is
\begin{equation}
F_A\colon  (x,x^*)\mapsto \sup_{(a,a^*)\in
\gra A}\big(\langle x, a^*\rangle+ \langle a,x^*\rangle-\langle a,a^*\rangle\big).
\end{equation}
\end{definition}

\begin{definition}[Fitzpatrick family]
Let  $A\colon X \To X^*$ be a maximal monotone operator.
The associated \emph{Fitzpatrick family}
$\mathcal{F}_A$ consists of all functions $F\colon
X\times X^*\to\RX$ that are lower semicontinuous and convex,
and that satisfy
$F\geq \scal{\cdot}{\cdot} $, and $F=\scal{\cdot}{\cdot}$ on $\gra A$.
\end{definition}

Following \cite{Penot2}, it will be convenient to set
$ F^\intercal\colon X^*\times X\colon (x^*,x)\mapsto F(x,x^*)$,
when
$F\colon X\times X^*\to\RX$, and similarly
for a function defined on $X^*\times X$.

\begin{fact}[Fitzpatrick]\label{GF:2}
\emph{(See \cite[Theorem~3.10]{Fitz88} or \cite[Corollary~4.1]{burachick}.)}
Let  $A\colon X \To X^*$ be a maximal monotone operator.
Then for every $(x,x^*)\in X\times X^*$,
\begin{equation}
F_A(x,x^*) = \min\menge{F(x,x^*)}{F\in \mathcal{F}_A}
\quad\text{and}\quad
F_A^{*\intercal}(x,x^*) = \max\menge{F(x,x^*)}{F\in \mathcal{F}_A}.
\end{equation}
\end{fact}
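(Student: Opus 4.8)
The plan is to prove the two assertions separately: first that $F_A$ is the pointwise smallest element of $\mathcal{F}_A$, and then that $F_A^{*\intercal}$ is the pointwise largest. The common backbone is the observation that membership in $\mathcal{F}_A$ is encoded by two inequalities, namely $F\geq\scal{\cdot}{\cdot}$ everywhere and $F\leq\scal{\cdot}{\cdot}$ on $\gra A$ (equivalently $F\leq\scal{\cdot}{\cdot}+\iota_{\gra A}$), together with lower semicontinuity and convexity. I will also record the conjugacy identity $p^*=F_A^\intercal$ for $p:=\scal{\cdot}{\cdot}+\iota_{\gra A}$, which gives $F_A=p^{*\intercal}$ and $F_A^{*\intercal}=p^{**}$ and organizes the whole argument.

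First I would check that $F_A\in\mathcal{F}_A$. As a supremum of continuous affine functions of $(x,x^*)$, $F_A$ is automatically convex and lower semicontinuous, and it is proper because it agrees with $\scal{\cdot}{\cdot}$ on the nonempty set $\gra A$. The substantive point is $F_A\geq\scal{\cdot}{\cdot}$: rewriting $F_A(x,x^*)-\scal{x}{x^*}=-\inf_{(a,a^*)\in\gra A}\scal{x-a}{x^*-a^*}$, this reduces to showing the infimum is $\leq 0$ for every $(x,x^*)$. If $(x,x^*)\in\gra A$ the choice $(a,a^*)=(x,x^*)$ gives $0$; if $(x,x^*)\notin\gra A$, maximality forces $(x,x^*)$ not to be monotonically related to $\gra A$, producing $(a,a^*)$ with $\scal{x-a}{x^*-a^*}<0$. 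This is the single step where maximal (rather than merely) monotonicity is essential, and I expect it to be the main conceptual obstacle.

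For minimality, fix $F\in\mathcal{F}_A$ and $(a,a^*)\in\gra A$, and test $F$ on the segment joining $(x,x^*)$ and $(a,a^*)$. Convexity gives $F\big(\lambda(x,x^*)+(1-\lambda)(a,a^*)\big)\leq\lambda F(x,x^*)+(1-\lambda)\scal{a}{a^*}$, while $F\geq\scal{\cdot}{\cdot}$ bounds the left-hand side below by the coupling evaluated at the convex combination. Expanding, dividing by $\lambda$, and letting $\lambda\downarrow 0$ yields $F(x,x^*)\geq\scal{x}{a^*}+\scal{a}{x^*}-\scal{a}{a^*}$; taking the supremum over $(a,a^*)\in\gra A$ gives $F\geq F_A$. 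Together with $F_A\in\mathcal{F}_A$, this proves $F_A=\min\mathcal{F}_A$.

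For maximality I would argue by Fenchel conjugation, where the only delicate bookkeeping is tracking the transpose under the reflexive pairing. For $F\in\mathcal{F}_A$, the equality $F=\scal{\cdot}{\cdot}$ on $\gra A$ gives, straight from the definition of the conjugate, $F^*\geq p^*=F_A^\intercal$; conjugating this inequality and using $F=F^{**}$ (valid since $F$ is proper, lower semicontinuous, convex) yields $F\leq(F_A^\intercal)^*=F_A^{*\intercal}$. It then remains to verify $F_A^{*\intercal}\in\mathcal{F}_A$: it is lower semicontinuous and convex as a conjugate; applying the previous line to $F=F_A$ gives $F_A^{*\intercal}\geq F_A\geq\scal{\cdot}{\cdot}$; and on $\gra A$ one has $F_A^{*\intercal}=p^{**}\leq p=\scal{\cdot}{\cdot}$ together with $F_A^{*\intercal}\geq F_A=\scal{\cdot}{\cdot}$, so equality holds there. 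Hence $F_A^{*\intercal}=\max\mathcal{F}_A$. The sole technical care is the identity $(F_A^\intercal)^*=F_A^{*\intercal}$, which I would justify by writing out both conjugates explicitly under the pairing on $X\times X^*$ and its reflexive dual.
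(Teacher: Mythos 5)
The paper does not prove this statement: it is quoted as a known Fact with citations to Fitzpatrick and to Burachik--Svaiter, so there is no in-paper proof to compare against. Your argument is correct and is essentially the classical proof from those references: the lower bound $F\geq F_A$ via convexity along the segment to a point of $\gra A$, the identity $p^*=F_A^\intercal$ for $p=\scal{\cdot}{\cdot}+\iota_{\gra A}$ giving $F_A^{*\intercal}=p^{**}$, and the biconjugation step $F=F^{**}\leq(F_A^\intercal)^*=F_A^{*\intercal}$; the one place maximality (rather than mere monotonicity) is used --- producing $(a,a^*)$ with $\scal{x-a}{x^*-a^*}<0$ when $(x,x^*)\notin\gra A$ --- is correctly identified and handled.
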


\begin{proposition}\label{better}
Let $A\colon X\rightrightarrows X^*$ be a maximal monotone and symmetric linear relation. Then
\begin{align*}
F_A(x,x^*)=\tfrac{1}{2}\overline{q_A}(x)+\tfrac{1}{2}\langle x,x^*\rangle
+\tfrac{1}{2}q^*_A(x^*),
\quad \forall (x,x^*)\in X\times X^*.
\end{align*}
\end{proposition}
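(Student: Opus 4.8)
The plan is to reduce everything to the convex function $\varphi:=\overline{q_A}$. By Proposition~\ref{f:PheSim}\ref{f:PheSim:quad}, maximality and symmetry give $A=\partial\varphi$, and since Fenchel conjugation is insensitive to the lower semicontinuous hull, $\varphi^*=q_A^*$. Because $A$ is a linear relation, $q_A$, $\varphi$ and $\varphi^*$ are all positively homogeneous of degree two, and for every $(a,a^*)\in\gra A$ one has $\langle a,a^*\rangle=2q_A(a)=2\varphi(a)=2\varphi^*(a^*)$ with $\varphi(a)+\varphi^*(a^*)=\langle a,a^*\rangle$. I would first record these identities, write $G(x,x^*):=\tfrac12\varphi(x)+\tfrac12\langle x,x^*\rangle+\tfrac12\varphi^*(x^*)$ for the claimed right-hand side, and put the Fitzpatrick function in the single-supremum form
\[
F_A(x,x^*)=\sup_{(a,a^*)\in\gra A}\big(\langle x,a^*\rangle+\langle a,x^*\rangle-\langle a,a^*\rangle\big).
\]

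The heart of the argument is the case $x\in\dom A$; fix $x_0^*\in Ax$. Here I would use symmetry: by Fact~\ref{linear}\ref{Sia:2c} the map $a\mapsto\langle x,Aa\rangle$ is a well-defined linear functional on $\dom A$, and symmetry gives $\langle x,a^*\rangle=\langle a,x_0^*\rangle$ for every $(a,a^*)\in\gra A$ (legitimate because $\dom A\subseteq(A0)^{\perp}$ makes both sides independent of the representatives in $Aa$ and in $Ax$). Substituting this and merging the two linear terms collapses the supremum to a genuine conjugate,
\[
F_A(x,x^*)=\sup_{a\in\dom A}\big(\langle a,x^*+x_0^*\rangle-2q_A(a)\big)=(2q_A)^*(x^*+x_0^*)=\tfrac12\varphi^*(x^*+x_0^*),
\]
the last equality using $q_A^*=\varphi^*$ and the degree-two homogeneity $\varphi^*(\tfrac12\,\cdot\,)=\tfrac14\varphi^*(\,\cdot\,)$. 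It then remains to expand $\varphi^*$ around $x_0^*$: since $x_0^*\in Ax=\partial\varphi(x)$ is equivalent to $x\in\partial\varphi^*(x_0^*)$, and $\varphi^*$ is a genuine quadratic form, the second-order identity $\varphi^*(x^*+x_0^*)=\varphi^*(x^*)+\langle x,x^*\rangle+\varphi^*(x_0^*)$ holds with $\varphi^*(x_0^*)=\varphi(x)$, and this yields exactly $F_A(x,x^*)=G(x,x^*)$ for $x\in\dom A$.

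Two points will need care. First, the expansion $\varphi^*(x^*+x_0^*)=\varphi^*(x^*)+\langle x,x^*\rangle+\varphi^*(x_0^*)$ is valid because $\varphi$ — being $\overline{q_A}$ with $q_A$ induced by the symmetric bilinear form $(a,b)\mapsto\langle a,Ab\rangle$ — obeys the parallelogram law; I would check this for $\varphi$ on $\dom A$ (where $\varphi=q_A$ by Proposition~\ref{f:PheSim}\ref{f:PheSim:lsc02}), propagate it to $\dom\varphi$ through the lower semicontinuous hull, and transfer it to $\varphi^*$. Second, I must pass from $x\in\dom A$ to arbitrary $(x,x^*)$. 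When $x\notin(A0)^{\perp}$, or $x\in(A0)^{\perp}\setminus\dom\varphi$, both sides are $+\infty$ (the supremum diverges along the subspace $A0$, respectively because $\langle x,A\,\cdot\,\rangle$ is unbounded relative to $q_A$, while $\varphi(x)=+\infty$). For $x\in\dom\varphi$ I would approximate $x$ by $x_n\in\dom A$ with $x_n\to x$ and $q_A(x_n)\to\varphi(x)$; lower semicontinuity of $F_A$ together with $F_A(x_n,x^*)=G(x_n,x^*)\to G(x,x^*)$ then yields $F_A\le G$.

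The main obstacle I expect is the reverse inequality $F_A\ge G$ off $\dom A$, where the supremum defining $F_A$ need not be attained, so lower semicontinuity gives only the wrong direction. Here I would instead exploit the radial structure $(a,a^*)\in\gra A\Rightarrow(ta,ta^*)\in\gra A$ to obtain the Cauchy--Schwarz-type bound
\[
F_A(x,x^*)\ge\sup_{(a,a^*)\in\gra A,\ \langle a,a^*\rangle>0}\frac{\big(\langle x,a^*\rangle+\langle a,x^*\rangle\big)^2}{4\langle a,a^*\rangle},
\]
and show, by driving the graph elements along a sequence inherited from the attained case $x\in\dom A$, that this lower bound already reaches $G(x,x^*)$. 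Reconciling this limiting construction with the possible non-attainment, while keeping the bilinear-form and parallelogram identities rigorous under the lower semicontinuous hull, is where the real work lies.
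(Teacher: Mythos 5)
Your reduction on $\dom A\times X^*$ is essentially the paper's first step (the paper completes the square inside the supremum instead of expanding $\varphi^*$ around $x_0^*$, but both exploit symmetry via Fact~\ref{linear}\ref{Sia:2c} to collapse the supremum to a conjugate of $q_A$), and your one-sided bound $F_A\le G$ on $\dom\overline{q_A}\times X^*$ via lower semicontinuity of $F_A$ along $x_n\to x$ with $q_A(x_n)\to\overline{q_A}(x)$ is correct. But the proof is not complete: the reverse inequality $F_A\ge G$ off $\dom A\times X^*$ --- which you yourself flag as ``where the real work lies'' --- is a genuine gap, not a detail. The Cauchy--Schwarz-type estimate obtained by scaling $(a,a^*)\mapsto(ta,ta^*)$ is only a lower bound, and you give no argument that it reaches $G(x,x^*)$ for $x\in\dom\overline{q_A}\setminus\dom A$ (nor that $F_A=+\infty$ when $x\in(A0)^\perp\setminus\dom\overline{q_A}$); ``driving the graph elements along a sequence inherited from the attained case'' cannot be made precise as stated, because the candidate maximizers leave $\dom A$ and the suprema need not be attained. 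A second, smaller omission: the convexity of $G$ on all of $X\times X^*$ is itself nontrivial because of the bilinear term $\scal{x}{x^*}$, and your outline never establishes it.

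The paper closes exactly this gap by global convex duality rather than by direct estimation. It first proves (Claim~2) that $k:=G$ is proper, convex and lower semicontinuous on $X\times X^*$ (convexity is inherited from $F_A$ on $\dom A\times X^*$ and propagated by approximation); since $k\ge\scal{\cdot}{\cdot}$ with equality precisely on $\gra A$ by Proposition~\ref{f:PheSim}\ref{f:PheSim:quad}, Fact~\ref{GF:2} gives $F_A\le k\le F_A^{*\intercal}$. It then computes $\dom\partial k^*=\gra A^{-1}$ and $k^{*\intercal}=\scal{\cdot}{\cdot}$ on $\gra A$, and invokes Borwein's theorem \cite{Borwein82} to conclude $k=k^{**}=(k^*+\iota_{\dom\partial k^*})^*=(\scal{\cdot}{\cdot}+\iota_{\gra A^{-1}})^*=F_A$. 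Some step of this kind --- identifying the conjugate of $k$ on the domain of its subdifferential and recovering $k$ from it --- is what your argument is missing; without it you have only $F_A\le G$ globally and $F_A=G$ on $\dom A\times X^*$. The parallelogram-law bookkeeping (through the lower semicontinuous hull and through conjugation) is a secondary, fixable issue.
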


\begin{proof}Define function $k:X\times X^*\rightarrow\RX$ by
\begin{align*}(z,z^*)\mapsto \tfrac{1}{2}\overline{q_A}(z)+\tfrac{1}{2}\langle z,z^*\rangle
+\tfrac{1}{2}q^*_A(z^*).
\end{align*}
Claim 1: $F_A=k$ on $\dom A\times X^*$.

Let $(x,x^*)\in X\times X^*$, and
suppose that $x\in\dom A$. Then
\begin{align*}
F_A(x,x^*)&=\sup_{(y,y^*)\in
\gra A}\Big(\langle x, y^*\rangle+ \langle y,x^*\rangle-\langle y,y^*\rangle\Big)\\
&=\sup_{y\in\dom A}\Big(\langle x, Ay\rangle+ \langle y,x^*\rangle-2q_A(y)\Big)\\
&=\tfrac{1}{2}\ q_A(x)+\sup_{y\in\dom A}\Big(\langle Ax, y\rangle
+ \langle y,x^*\rangle-\tfrac{1}{2}\ q_A(x)-2q_A(y)\Big)\\
&=\tfrac{1}{2}\ q_A(x)+\tfrac{1}{2}\sup_{y\in\dom A}\Big(\langle Ax, 2y\rangle
+ \langle 2y,x^*\rangle-\ q_A(x)-4q_A(y)\Big)\\
&=\tfrac{1}{2}\ q_A(x)+\tfrac{1}{2}\sup_{z\in\dom A}\Big(\langle Ax, z\rangle
+ \langle z,x^*\rangle-\ q_A(x)-q_A(z)\Big)\\
&=\tfrac{1}{2}\ q_A(x)+\tfrac{1}{2}\sup_{z\in\dom A}\Big(\langle z,x^*\rangle-\ q_A(z-x)\Big)\\
&=\tfrac{1}{2}\ q_A(x)+\tfrac{1}{2}\langle x,x^*\rangle
+\tfrac{1}{2}\sup_{z\in\dom A}\Big(\langle z-x,x^*\rangle-\ q_A(z-x)\Big)\\
&=\tfrac{1}{2} q_A(x)+\tfrac{1}{2}\langle x,x^*\rangle+\tfrac{1}{2}q^*_A(x^*)\\
&=k(x,x^*)\quad(\text{by Proposition~\ref{f:PheSim}(i)}).
\end{align*}
Claim 2: $k$ is  convex and proper lower semicontinuous on $X\times X^*$.

Since $F_A$ is convex, $\tfrac{1}{2} q_A+\tfrac{1}{2}\langle \cdot,\cdot\rangle+\tfrac{1}{2}q^*_A$
is convex on $\dom A\times X^*$. Now we show that $k$ is convex. Let $\{(a,a^*),(b,b^*)\}\subseteq\dom k$,
and $t\in\left]0,1\right[$.
Then we have $\{a,b\}\subseteq\dom\overline{q_A}\subseteq\overline{\dom A}$. Thus, there exist $(a_n), (b_n)$ in $\dom A$
such that $a_n\rightarrow a, b_n\rightarrow b$ with $q_A(a_n)\rightarrow\overline{q_A}(a),
q_A(b_n)\rightarrow\overline{q_A}(b)$.
Since $\tfrac{1}{2} q_A+\tfrac{1}{2}\langle \cdot,\cdot\rangle+\tfrac{1}{2}q^*_A$
is convex on $\dom A\times X^*$, we have
\begin{align}
&\big(\tfrac{1}{2} q_A+\tfrac{1}{2}\langle \cdot,\cdot\rangle+\tfrac{1}{2}q^*_A\big)\big(ta_n+(1-t)b_n,ta^*+(1-t)b^*\big)
\nonumber\\&
\leq t\big(\tfrac{1}{2} q_A+\tfrac{1}{2}\langle \cdot,\cdot\rangle+\tfrac{1}{2}q^*_A\big)(a_n,a^*)
+(1-t)\big(\tfrac{1}{2} q_A+\tfrac{1}{2}\langle \cdot,\cdot\rangle+\tfrac{1}{2}q^*_A\big)(b_n,b^*).\label{Sff:2}\end{align}
Take $\liminf$ on both sides of \eqref{Sff:2} to see that
\begin{align*}
k\big(ta+(1-t)b,ta^*+(1-t)b^*\big)\leq tk(a,a^*)+(1-t)k(b,b^*).\end{align*}
Hence $k$ is convex on $X\times X^*$.
Thus, $k$ is convex and proper lower semicontinuous.

Claim 3: $F_A=k$ on $X\times X^*$. To this end, we first observe that
\begin{align}\dom\partial k^*=\gra A^{-1}.\label{Bfi:2}\end{align}
We have
\begin{align*}&(w^*,w)\in\dom\partial k^*\Leftrightarrow(w^*,w)
\in\dom\partial (2k)^*\Leftrightarrow (a,a^*)\in\partial (2k)^*(w^*,w),
\quad \exists (a,a^*)\in X\times X^*\\
&\Leftrightarrow (w^*,w)\in\partial(2k)(a,a^*)
 \Leftrightarrow (w^*-a^*,w-a)\in\partial (\overline{q_A}\small\oplus q^*_A)(a,a^*),
 \quad(\text{by  \cite[Theorem~2.9.8]{Clarke}})\\
&\Leftrightarrow w^*-a^*\in\partial \overline{q_A}(a),\; w-a\in\partial q^*_A(a^*)\\
&\Leftrightarrow w^*-a^*\in \partial \overline{q_A}(a),\; a^*\in\partial \overline{q_A}(w-a)\\
&\Leftrightarrow w^*-a^*\in Aa,\; a^*\in A(w-a),\quad(\text{by  Proposition~\ref{f:PheSim}(ii)})\\
&\Leftrightarrow (w,w^*)\in\gra A\Leftrightarrow (w^*,w)\in\gra A^{-1}.\end{align*}
Next we observe that
\begin{align}k^{*\intercal}(z,z^*)=\langle z,z^*\rangle,\quad \forall(z,z^*)\in\gra A.\label{Bfi:1}\end{align}

Since $k(z,z^*)\geq\langle z,z^*\rangle$
and
\begin{align*}k(z,z^*)=\langle z,z^*\rangle\Leftrightarrow
 \overline{q_A}(z)+q^*_A(z^*)=\langle z,z^*\rangle
\Leftrightarrow z^*\in\partial\overline{q_A}(z)
=Az\quad(\text{by Proposition~\ref{f:PheSim}(ii)}),\end{align*}
Fact~\ref{GF:2} implies that
$F_A\leq k\leq F^{*\intercal}_A$. Hence
$F_A\leq k^{*\intercal}\leq F_A^{*\intercal}$. Then by Fact~\ref{GF:2},
\eqref{Bfi:1} holds.

Now using \eqref{Bfi:1}\eqref{Bfi:2} and
a result by J.\ Borwein (see \cite[Theorem~1]{Borwein82} or
\cite[Theorem~3.1.4(i)]{Zalinescu}), we have
$k=k^{**}=(k^*+\iota_{\dom\partial k^*})^*
=(\langle\cdot,\cdot\rangle+\iota_{\gra A^{-1}})^*=F_A.$
\end{proof}

\begin{definition}[Fitzpatrick functions of order $n$]\emph{\cite[Definition~2.2 and Proposition~2.3]{BBBRW}}
Let $A: X\rightrightarrows X^*$. For every $n\in\{2,3,\ldots\}$,
 the \emph{Fitzpatrick function of $A$ of order $n$} is
\begin{equation*}F_{A,\, n}(x,x^*):=
\sup\limits_{\big\{(a_1,a^*_1),\cdots (a_{n-1},a^*_{n-1})\big\}\subseteq\gra A }
\Big(\langle x,x^*\rangle+\Big(\sum^{n-2}_{i=1}\langle a_{i+1}-a_i, a^*_i\rangle\Big)
+\langle x-a_{n-1},a^*_{n-1}\rangle+\langle a_1-x,x^*\rangle\Big).\end{equation*}
Clearly, $F_{A,\, 2}=F_A$.
We set $F_{A,\, \infty}=\sup_{n\in\{2,3,\cdots\}}F_{A,\, n}$.

\end{definition}
\begin{fact}[recursion]\label{Ind:1}\emph{(See {\cite[Proposition~2.13]{BLW}}.)}
Let $A: X\rightrightarrows X^*$  be  monotone, and let $n\in\{2,3,\ldots\}$. Then
\begin{equation*}F_{A,\, n+1}(x,x^*)=
\sup_{(a,a^*)\in\gra A}\big(F_{A,\, n}(a,x^*)+\langle x-a,a^*\rangle\big),
\quad \forall (x,x^*)\in X\times X^*.
\end{equation*}

\end{fact}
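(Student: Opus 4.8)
The plan is to prove the identity by directly manipulating the definition of $F_{A,\,n+1}$, peeling off the last of the $n$ index pairs and recognizing the leftover expression as $F_{A,\,n}$. Fix $(x,x^*)\in X\times X^*$. Writing out the definition,
\begin{equation*}
F_{A,\,n+1}(x,x^*)=\sup_{\{(a_1,a^*_1),\ldots,(a_n,a^*_n)\}\subseteq\gra A}\Big(\langle x,x^*\rangle+\sum_{i=1}^{n-1}\langle a_{i+1}-a_i,a^*_i\rangle+\langle x-a_n,a^*_n\rangle+\langle a_1-x,x^*\rangle\Big),
\end{equation*}
I would split off the last summand, $\sum_{i=1}^{n-1}\langle a_{i+1}-a_i,a^*_i\rangle=\sum_{i=1}^{n-2}\langle a_{i+1}-a_i,a^*_i\rangle+\langle a_n-a_{n-1},a^*_{n-1}\rangle$, so as to isolate the dependence on the final pair $(a_n,a^*_n)$.

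The decisive step is the algebraic bookkeeping of the two terms carrying $x^*$. On the left-hand side they combine as $\langle x,x^*\rangle+\langle a_1-x,x^*\rangle=\langle a_1,x^*\rangle$. I would then freeze the last pair as $(a_n,a^*_n)=:(a,a^*)\in\gra A$ and observe that the terms not involving $a^*_n$, namely
\begin{equation*}
\langle a_n,x^*\rangle+\sum_{i=1}^{n-2}\langle a_{i+1}-a_i,a^*_i\rangle+\langle a_n-a_{n-1},a^*_{n-1}\rangle+\langle a_1-a_n,x^*\rangle,
\end{equation*}
are precisely the supremand defining $F_{A,\,n}(a,x^*)$ over the remaining $n-1$ pairs (here the two $x^*$-terms $\langle a_n,x^*\rangle+\langle a_1-a_n,x^*\rangle$ again collapse to $\langle a_1,x^*\rangle$, matching the left-hand side). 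The only residual contribution is $\langle x-a_n,a^*_n\rangle=\langle x-a,a^*\rangle$, which carries no dependence on $(a_1,\ldots,a_{n-1})$.

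Having matched the two supremands termwise, the proof concludes by factoring the supremum over the product of $n$ copies of $\gra A$ into iterated suprema, taking the sup over the inner $n-1$ pairs first and the sup over $(a,a^*)$ last:
\begin{equation*}
F_{A,\,n+1}(x,x^*)=\sup_{(a,a^*)\in\gra A}\Big(\sup_{\{(a_1,a^*_1),\ldots,(a_{n-1},a^*_{n-1})\}\subseteq\gra A}\big[\,\cdots\,\big]+\langle x-a,a^*\rangle\Big)=\sup_{(a,a^*)\in\gra A}\big(F_{A,\,n}(a,x^*)+\langle x-a,a^*\rangle\big).
\end{equation*}
I expect the main obstacle to be purely notational: keeping the index shifts and the distinct roles of $x$, $x^*$, $a$, $a^*$ straight, and in particular verifying that the endpoint $x^*$-terms telescope identically on both sides. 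The case $n=2$ needs no separate treatment, since the empty-sum convention $\sum_{i=1}^{0}=0$ reduces $F_{A,\,2}$ to $F_A$ and the argument goes through verbatim.
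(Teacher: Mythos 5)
Your proof is correct. Note that the paper itself gives no proof of this statement: it is imported verbatim as a ``Fact'' with a citation to \cite[Proposition~2.13]{BLW}, so there is no internal argument to compare against. Your direct verification is the natural one and it checks out: writing the supremand of $F_{A,\,n+1}(x,x^*)$ with the pairs $(a_1,a_1^*),\ldots,(a_n,a_n^*)$ and setting $(a,a^*):=(a_n,a_n^*)$, the endpoint terms collapse as $\langle x,x^*\rangle+\langle a_1-x,x^*\rangle=\langle a_1,x^*\rangle$, which is exactly what $\langle a,x^*\rangle+\langle a_1-a,x^*\rangle$ collapses to in the supremand of $F_{A,\,n}(a,x^*)$; the remaining terms match termwise, leaving the residual $\langle x-a,a^*\rangle$. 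The interchange $\sup_{(a_1,\ldots,a_n)}=\sup_{a_n}\sup_{(a_1,\ldots,a_{n-1})}$ is valid in $\RX$ with no finiteness issues, since every individual supremand is a finite real number (and if $\gra A=\varnothing$ both sides are $-\infty$). Two cosmetic remarks: the index families should really be read as ordered tuples rather than sets (an abuse of notation inherited from the paper's definition), and your argument never actually uses the monotonicity of $A$, so you have in fact proved slightly more than the stated Fact.
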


\begin{theorem}\label{Festival:2}
Let $A: X\rightrightarrows X^*$  be a maximal monotone and symmetric linear relation,
let $n\in\{2,3,\ldots\}$, and let $(x,x^*)\in X\times X^*$. Then
\begin{align}F_{A,\, n}(x,x^*)=\tfrac{n-1}{n}\overline{q_A}(x)+\tfrac{n-1}{n}q^*_A(x^*)
+\tfrac{1}{n}\langle x,x^*\rangle,
\label{New:1}\end{align}consequently, $F_{A,\, n}(x,x^*)=\tfrac{2(n-1)}{n}F_A(x,x^*)+\tfrac{2-n}{n}\langle x,x^*\rangle$.
Moreover, \begin{align}F_{A,\, \infty}=\overline{q_A}\small\oplus
 q^*_A=2F_A-\langle\cdot,\cdot\rangle.\label{New:2}\end{align}

\end{theorem}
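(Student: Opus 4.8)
The plan is to establish the closed-form formula \eqref{New:1} by induction on $n$, using the recursion in Fact~\ref{Ind:1} as the engine, with the base case $n=2$ supplied directly by Proposition~\ref{better}. Indeed, since $F_{A,\,2}=F_A$ and Proposition~\ref{better} gives $F_A=\tfrac12\overline{q_A}(x)+\tfrac12\langle x,x^*\rangle+\tfrac12 q_A^*(x^*)$, the right-hand side of \eqref{New:1} at $n=2$ reads $\tfrac12\overline{q_A}(x)+\tfrac12 q_A^*(x^*)+\tfrac12\langle x,x^*\rangle$, matching exactly. So the base case is immediate.

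For the inductive step, I would assume \eqref{New:1} holds for some $n\geq 2$ and compute $F_{A,\,n+1}$ via Fact~\ref{Ind:1}:
\begin{align*}
F_{A,\,n+1}(x,x^*)=\sup_{(a,a^*)\in\gra A}\big(F_{A,\,n}(a,x^*)+\langle x-a,a^*\rangle\big).
\end{align*}
Substituting the inductive hypothesis $F_{A,\,n}(a,x^*)=\tfrac{n-1}{n}\overline{q_A}(a)+\tfrac{n-1}{n}q_A^*(x^*)+\tfrac{1}{n}\langle a,x^*\rangle$ and using that on $\gra A$ we have $a^*\in Aa$, so $\langle a,a^*\rangle=2q_A(a)=2\overline{q_A}(a)$ (by Proposition~\ref{f:PheSim}(i), since $a\in\dom A$), the expression inside the supremum becomes a function of $a$ and $a^*$ that I expect to reorganize into a term independent of the supremand plus $\tfrac{n}{n+1}q_A^*(x^*)$-type pieces and a residual supremum recognizable as a Fenchel conjugate. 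The key algebraic manipulation is to collect the $\overline{q_A}(a)$ and $\langle a,\cdot\rangle$ terms and complete the conjugation so that $\sup_a\big(\langle a,\tfrac{1}{n}x^*+\text{(something)}\rangle-c\,\overline{q_A}(a)\big)$ produces a rescaled $q_A^*$; here one must carefully track the coefficients $\tfrac{n-1}{n}$ versus the target $\tfrac{n}{n+1}$, and the appearance of $\langle x,a^*\rangle$ from the $\langle x-a,a^*\rangle$ term. Once the coefficients collapse to $\tfrac{n}{n+1}\overline{q_A}(x)+\tfrac{n}{n+1}q_A^*(x^*)+\tfrac{1}{n+1}\langle x,x^*\rangle$, the induction closes.

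The stated consequence $F_{A,\,n}=\tfrac{2(n-1)}{n}F_A+\tfrac{2-n}{n}\langle\cdot,\cdot\rangle$ then follows by pure algebra: substituting $\overline{q_A}(x)+q_A^*(x^*)=2F_A(x,x^*)-\langle x,x^*\rangle$ (rearranging Proposition~\ref{better}) into \eqref{New:1} and simplifying the coefficients. Finally, for \eqref{New:2} I would take the supremum over $n\in\{2,3,\ldots\}$ of \eqref{New:1}: writing $\tfrac{n-1}{n}=1-\tfrac1n$, the coefficient of $\overline{q_A}(x)+q_A^*(x^*)$ increases to $1$ and the coefficient $\tfrac1n$ of $\langle x,x^*\rangle$ decreases to $0$ as $n\to\infty$, but since the supremum is a pointwise sup one must argue monotonicity in $n$ (or directly evaluate the sup) to conclude $F_{A,\,\infty}=\overline{q_A}\oplus q_A^*$, and then rewrite this as $2F_A-\langle\cdot,\cdot\rangle$ using Proposition~\ref{better} again.

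The main obstacle I anticipate is the inductive-step computation: correctly executing the Fenchel-conjugate completion so that the supremum over $(a,a^*)\in\gra A$ yields precisely the coefficient $\tfrac{n}{n+1}$ on $q_A^*(x^*)$ and $\overline{q_A}(x)$, while properly handling the constraint $a^*\in Aa$ and the symmetric/linear structure (the identity $\langle a,a^*\rangle=2\overline{q_A}(a)$ and the conjugacy $q_A^*$). A subtle point is ensuring the $q_A^*$ that emerges is genuinely the conjugate of $q_A$ (equivalently of $\overline{q_A}$, since conjugation is insensitive to lower semicontinuous hulls), which relies on Proposition~\ref{f:PheSim} linking $A$, $\overline{q_A}$, and $\partial\overline{q_A}$. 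For \eqref{New:2}, the minor care needed is justifying the interchange of supremum and the limit in $n$, which monotonicity of the coefficients makes routine.
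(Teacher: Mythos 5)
Your proposal is correct and follows essentially the same route as the paper: induction via Fact~\ref{Ind:1} with base case Proposition~\ref{better}, the identity $\langle a,a^*\rangle=2q_A(a)=2\overline{q_A}(a)$ on $\gra A$ from Proposition~\ref{f:PheSim}(i), and a conjugate-completion in the inductive step (the paper executes it by rescaling $(a,a^*)\mapsto(\tfrac{n+1}{2n}a,\tfrac{n+1}{2n}a^*)$ within the linear subspace $\gra A$ to recognize the supremum as $\tfrac{2n}{n+1}F_A(x,\tfrac{1}{n}x^*)$ and then using $q_A^*(\tfrac{1}{n}x^*)=\tfrac{1}{n^2}q_A^*(x^*)$). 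Your treatment of \eqref{New:2} via the monotone convergence of the coefficients matches the paper's argument, which bounds $\overline{q_A}(x)+q_A^*(x^*)-F_{A,\,n}(x,x^*)=\tfrac{1}{n}\big(\overline{q_A}(x)+q_A^*(x^*)-\langle x,x^*\rangle\big)\geq 0$ and lets $n\to\infty$.
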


\begin{proof}Let $(x,x^*)\in X\times X^*$.
The proof is by induction on $n$. If $n=2$, then the result follows for Proposition~\ref{better}.

Now assume that \eqref{New:1} holds for $n\geq2$.
Using Fact~\ref{Ind:1}, we see that
\begin{align*}
&F_{A,\, n+1}(x,x^*)=\sup_{(a,a^*)\in\gra A}\Big(F_{A,\, n}(a,x^*)
+\langle x-a,a^*\rangle\Big)\\
&=\sup_{(a,a^*)\in\gra A}\Big(\tfrac{n-1}{n}q^*_A(x^*)
+\tfrac{n-1}{n}\overline{q_A}(a)+\tfrac{1}{n}\langle a,x^*\rangle+\langle x-a,a^*\rangle\Big)\\
&=\tfrac{n-1}{n}q^*_A(x^*)+\sup_{(a,a^*)\in\gra A}\Big(\tfrac{n-1}{2n}\langle a,a^*\rangle
+\langle a,\tfrac{1}{n}x^*\rangle+\langle x,a^*\rangle-\langle a,a^*\rangle \Big),
\quad(\text{by  Proposition~\ref{f:PheSim}(i)})\\
&=\tfrac{n-1}{n}q^*_A(x^*)+\sup_{(a,a^*)\in\gra A}\Big(
\langle a,\tfrac{1}{n}x^*\rangle+\langle x,a^*\rangle-
\tfrac{n+1}{2n}\langle a,a^*\rangle \Big)\\
&=\tfrac{n-1}{n}q^*_A(x^*)+\tfrac{2n}{n+1}\sup_{(a,a^*)\in\gra A}\Big(
\langle \tfrac{n+1}{2n}a,\tfrac{1}{n}x^*\rangle+\langle x,\tfrac{n+1}{2n}a^*\rangle-
\langle \tfrac{n+1}{2n}a,\tfrac{n+1}{2n}a^*\rangle \Big)\\
&=\tfrac{n-1}{n}q^*_A(x^*)+\tfrac{2n}{n+1}\sup_{(b,b^*)\in\gra A}\Big(
\langle b,\tfrac{1}{n}x^*\rangle+\langle x,b^*\rangle-
\langle b,b^*\rangle \Big)\\
&=\tfrac{n-1}{n}q^*_A(x^*)+\tfrac{2n}{n+1}F_A(x,\tfrac{1}{n}x^*)
\\
&=\tfrac{n-1}{n}q^*_A(x^*)+\tfrac{n}{n+1}q^*_A(\tfrac{1}{n}x^*)+\tfrac{n}{n+1}\overline{q_A}(x)
+\tfrac{1}{n+1}\langle x^*,x\rangle\quad(\text{by Proposition~\ref{better}})\\
&=\tfrac{n-1}{n}q^*_A(x^*)+\tfrac{1}{(n+1)n}q^*_A(x^*)+\tfrac{n}{n+1}
\overline{q_A}(x)+\tfrac{1}{n+1}\langle x^*,x\rangle\\
&=\tfrac{n}{n+1}q^*_A(x^*)+\tfrac{n}{n+1}\overline{q_A}(x)
+\tfrac{1}{n+1}\langle x,x^*\rangle,\end{align*}
which is the result for $n+1$.
 Thus,
 by Proposition~\ref{better}, $F_{A,\, n}(x,x^*)
=\tfrac{2(n-1)}{n}F_A(x,x^*)+\tfrac{2-n}{n}\langle x,x^*\rangle$.\\
By \eqref{New:1}, $\dom F_{A,\, n}=\dom(\overline{q_A}\small\oplus q^*_A)$.
Now suppose that $(x,x^*)\in
\dom F_{A,\, n}$.\\
By  $\overline{q_A}(x)+ q^*_A(x^*)-F_{A,\, n}(x,x^*)
=\tfrac{1}{n}\Big(\overline{q_A}(x)+q^*_A(x^*)-\langle x,x^*\rangle\Big)\geq 0$ and
$$F_{A,\, n}(x,x^*)\rightarrow (\overline{q_A}\small\oplus q^*_A)(x,x^*),\;n\rightarrow\infty.$$
 Thus, \eqref{New:2} holds.
\end{proof}

\begin{remark}Theorem~\ref{Festival:2} generalizes and simplifies
 \cite[Example 4.4]{BBBRW} and \cite[Example 6.4]{BBW}. See Corollary~\ref{Cof:1}.\end{remark}
\begin{remark} Formula Identity
\eqref{New:1} does not hold for nonsymmetric linear relations. See \cite[Example~2.8]{BBW}
for an example when $A$ is skew linear operator and \eqref{New:1} fails.
\end{remark}

\begin{corollary}\label{Cof:1}
Let $A: X\rightarrow X^*$  be a maximal monotone and symmetric linear operator,
let $n\in\{2,3,\ldots\}$, and let $(x,x^*)\in X\times X^*$. Then
\begin{align}F_{A,\, n}(x,x^*)=\tfrac{n-1}{n}{q_A}(x)+\tfrac{n-1}{n}q^*_A(x^*)
+\tfrac{1}{n}\langle x,x^*\rangle,
\end{align}
and, \begin{align}F_{A,\, \infty}=q_A\small\oplus q^*_A.\end{align}
If $X$ is a Hilbert space, then
\begin{align}F_{\Id,\, n}(x,x^*)=\tfrac{n-1}{2n}\|x\|^2+\tfrac{n-1}{2n}\|x^*\|^2
+\tfrac{1}{n}\langle x,x^*\rangle,
\end{align}
and, \begin{align}F_{\Id,\, \infty}=\tfrac{1}{2}\|\cdot\|^2\small\oplus \tfrac{1}{2}\|\cdot\|^2.\end{align}
\end{corollary}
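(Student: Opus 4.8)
The plan is to obtain Corollary~\ref{Cof:1} as an immediate specialization of Theorem~\ref{Festival:2}, the only new ingredient being that for a single-valued linear operator the lower semicontinuous hull $\overline{q_A}$ can be dispensed with. First I would record that, by the convention fixed in the introduction, $A\colon X\to X^*$ denotes a single-valued mapping with full domain, so that $\dom A=X$. Consequently $\iota_{\dom A}=\iota_X\equiv 0$, and Proposition~\ref{f:PheSim}\ref{f:PheSim:lsc02}, which asserts $\overline{q_A}+\iota_{\dom A}=q_A$, collapses to $\overline{q_A}=q_A$ on all of $X$. This is the single observation that distinguishes the operator case from the relation case.

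With that identity in hand, the first displayed formula follows by substituting $\overline{q_A}=q_A$ into \eqref{New:1} of Theorem~\ref{Festival:2}; note that $q_A^*$ is unchanged, since the Fenchel conjugate is insensitive to passing to the lower semicontinuous hull. The infimal-convolution identity $F_{A,\, \infty}=q_A\small\oplus q^*_A$ then follows by the same substitution into \eqref{New:2}.

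For the Hilbert space case $A=\Id$, I would compute the two ingredients directly. Since $\Id$ is a maximal monotone, symmetric, linear operator, Theorem~\ref{Festival:2} applies, and $q_{\Id}(x)=\tfrac12\scal{x}{x}=\tfrac12\|x\|^2$. Because $\tfrac12\|\cdot\|^2$ is self-conjugate in a Hilbert space (Moreau), we have $q^*_{\Id}(x^*)=\tfrac12\|x^*\|^2$. Substituting these two expressions into the formula for $F_{A,\, n}$ and $F_{A,\, \infty}$ already established yields the stated expressions for $F_{\Id,\, n}$ and $F_{\Id,\, \infty}$.

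The argument is genuinely routine, and I do not anticipate any real obstacle. The only point requiring care is the reduction $\overline{q_A}=q_A$, which rests precisely on Proposition~\ref{f:PheSim}\ref{f:PheSim:lsc02} together with $\dom A=X$; everything thereafter is direct substitution and, in the Hilbert case, the elementary self-conjugacy of $\tfrac12\|\cdot\|^2$. It remains only to note that $\Id$ indeed satisfies the hypotheses of Theorem~\ref{Festival:2}, which is standard.
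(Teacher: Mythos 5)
Your proposal is correct and follows the same (implicit) route as the paper, which states the corollary as an immediate specialization of Theorem~\ref{Festival:2} without printed proof: the only substantive point is that $\dom A=X$ forces $\overline{q_A}=q_A$ via Proposition~\ref{f:PheSim}\ref{f:PheSim:lsc02}, and the Hilbert-space case is just $q_{\Id}=\tfrac12\|\cdot\|^2$ together with its self-conjugacy. Note also that the conjugate $q_A^*$ already appears unchanged in \eqref{New:1}, so your remark about conjugates being insensitive to lower semicontinuous hulls, while true, is not even needed.
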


\begin{definition}
Let $F_1, F_2\colon X\times X^*\rightarrow\RX$.
Then the \emph{partial inf-convolution} $F_1\Box_2 F_2$
is the function defined on $X\times X^*$ by
\begin{equation*}F_1\Box_2 F_2\colon
(x,x^*)\mapsto \inf_{y^*\in X^*}\big(
F_1(x,x^*-y^*)+F_2(x,y^*)\big).
\end{equation*}
\end{definition}

\begin{theorem}[nth order Fitzpatrick function of the sum]\label{FS6}
Let $A,B\colon X\To X^*$ be maximal monotone and symmetric linear relations,
 and let $n\in\{2,3,\cdots\}$.
Suppose that $\dom A-\dom B$ is closed.
Then $F_{A+B,\, n}= F_{A,\,n}\Box_2F_{B,\,n}$. Moreover,
$F_{A+B,\, \infty}= F_{A,\,\infty}\Box_2F_{B,\,\infty}$.
\end{theorem}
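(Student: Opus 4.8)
The plan is to use the explicit formula from Theorem~\ref{Festival:2}, which reduces the $n$th order Fitzpatrick function of any maximal monotone symmetric linear relation to a fixed affine combination of $\overline{q_A}$, $q_A^*$, and the duality pairing. Concretely, since $F_{A,\,n}=\tfrac{n-1}{n}\overline{q_A}\oplus q_A^*$ deformed by the pairing term, the computation of $F_{A+B,\,n}$ via the partial inf-convolution $F_{A,\,n}\Box_2 F_{B,\,n}$ should collapse to a statement purely about the convex functions $q_A,q_B$ and their conjugates, with the pairing terms combining cleanly. First I would write out, using \eqref{New:1}, both sides in terms of $\overline{q_A},q_A^*,\overline{q_B},q_B^*,\overline{q_{A+B}},q_{A+B}^*$ and the pairing $\langle\cdot,\cdot\rangle$, so that the whole identity becomes an identity among these objects.

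The essential structural fact I would isolate first is that $q_{A+B}=q_A+q_B$ on domains (since $\langle x,(A+B)x\rangle=\langle x,Ax\rangle+\langle x,Bx\rangle$ for $x\in\dom A\cap\dom B=\dom(A+B)$), and correspondingly that $\overline{q_{A+B}}=\overline{q_A}+\overline{q_B}$ (this is where lower semicontinuous hulls must be handled carefully). Dually, one needs $q_{A+B}^*=q_A^*\,\square\,q_B^*$, the ordinary infimal convolution of the conjugates, and moreover that the infimal convolution is \emph{exact} (attained). This exactness is precisely what requires the hypothesis that $\dom A-\dom B$ is closed: it is a Fenchel-duality / constraint-qualification condition of Attouch--Br\'ezis type guaranteeing $(q_A+q_B)^*=q_A^*\,\square\,q_B^*$ with attainment, and it is the analogue of the domain condition under which the sum $A+B$ of two maximal monotone linear relations remains maximal monotone (so that Theorem~\ref{Festival:2} even applies to $A+B$).

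Once these dictionary entries are established, the verification $F_{A+B,\,n}=F_{A,\,n}\Box_2 F_{B,\,n}$ becomes a direct unwinding. Fixing $(x,x^*)$ and writing $x^*=(x^*-y^*)+y^*$, the partial inf-convolution is
\begin{align*}
\inf_{y^*}\Big(\tfrac{n-1}{n}\overline{q_A}(x)+\tfrac{n-1}{n}q_A^*(x^*-y^*)+\tfrac1n\langle x,x^*-y^*\rangle
+\tfrac{n-1}{n}\overline{q_B}(x)+\tfrac{n-1}{n}q_B^*(y^*)+\tfrac1n\langle x,y^*\rangle\Big).
\end{align*}
The two $\overline{q}$ terms add to $\tfrac{n-1}{n}\overline{q_{A+B}}(x)$, the two pairing terms add to $\tfrac1n\langle x,x^*\rangle$ independently of $y^*$, and the infimum over $y^*$ of $\tfrac{n-1}{n}\big(q_A^*(x^*-y^*)+q_B^*(y^*)\big)$ equals $\tfrac{n-1}{n}(q_A^*\,\square\,q_B^*)(x^*)=\tfrac{n-1}{n}q_{A+B}^*(x^*)$, yielding exactly $F_{A+B,\,n}(x,x^*)$ by \eqref{New:1}. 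The main obstacle will be the dual step: proving $\overline{q_{A+B}}=\overline{q_A}+\overline{q_B}$ and the exact conjugate formula $q_{A+B}^*=q_A^*\,\square\,q_B^*$ from the closedness of $\dom A-\dom B$, which is where the constraint qualification is genuinely used and where one must invoke a Fenchel-duality theorem (Attouch--Br\'ezis, or the Borwein-type result already cited as \cite{Borwein82}) rather than mere algebra. The $n=\infty$ statement then follows by taking the supremum over $n$ (equivalently, passing to the limit), using \eqref{New:2} and the fact that $\Box_2$ commutes with the monotone limit in $n$; I would verify this either by the same dictionary applied to $\overline{q}\,\oplus\,q^*$ directly or by a straightforward limiting argument on the exact formulas.
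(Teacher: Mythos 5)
Your proposal is correct in outline but takes a genuinely different route from the paper. Both arguments start from the explicit formula \eqref{New:1} and both observe that the two pairing terms collapse to $\tfrac1n\scal{x}{x^*}$ so that the identity reduces to a statement about the $q$-functions. The paper, however, never touches $\overline{q_{A+B}}$ or $q_{A+B}^*$ directly: it rewrites $F_{A,\,n}=\tfrac{2(n-1)}{n}F_A+\tfrac{2-n}{n}\scal{\cdot}{\cdot}$ and reduces the whole identity to the known second-order case $F_A\Box_2F_B=F_{A+B}$, quoted from \cite[Theorem~5.10]{BWY3}, with \cite[Theorem~5.5]{SiZ} or \cite{Voisei06} supplying maximal monotonicity of $A+B$; the hypothesis that $\dom A-\dom B$ is closed enters only through those citations, and the proof is then a two-line computation, the $\infty$ case being handled the same way via \eqref{New:2}. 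You instead propose to build the dictionary $\overline{q_{A+B}}=\overline{q_A}+\overline{q_B}$ and $q_{A+B}^*=q_A^*\,\square\,q_B^*$ from scratch. This can be made rigorous: since $\dom A-\dom B$ is a closed subspace, an Attouch--Br\'ezis argument gives $(q_A+q_B)^*=q_A^*\,\square\,q_B^*$ exactly, and the hull identity then follows by biconjugation because $\overline{q_A}+\overline{q_B}$ is lower semicontinuous with the same conjugate. But be aware that (a) this essentially reproves the content of \cite[Theorem~5.10]{BWY3} rather than using it, (b) the identity $\overline{q_{A+B}}=\overline{q_A}+\overline{q_B}$ is the genuinely delicate step and you flag it without carrying it out (the easy inequality $\overline{q_A}+\overline{q_B}\leq\overline{q_{A+B}}$ is automatic; the reverse needs the duality argument), and (c) \cite{Borwein82} is not a suitable reference for this Fenchel-duality step --- it concerns $\varepsilon$-subgradients --- so you would have to import Attouch--Br\'ezis, which the paper does not use. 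Your direct-dictionary treatment of the $\infty$ case is fine; the alternative of commuting $\sup_n$ with the infimum over $y^*$ would require separate justification.
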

\begin{proof}By \cite[Theorem~5.5]{SiZ} or \cite{Voisei06}, $A+B$ is maximal monotone.
Hence $A+B$ is a maximal monotone and symmetric linear relation.
Let $(x,x^*)\in X\times X^*$. Then by Theorem~\ref{Festival:2},
\begin{align*}
&F_{A,\,n}\Box_2F_{B,\,n}(x,x^*)\\&=\inf_{y^*\in X^*}
\Big(\tfrac{2(n-1)}{n}F_A(x,y^*)+\tfrac{2-n}{n}\langle x,y^*\rangle+
\tfrac{2(n-1)}{n}F_B(x,x^*-y^*)+\tfrac{2-n}{n}\langle x,x^*-y^*\rangle\Big)\\
&=\tfrac{2-n}{n}\langle x,x^*\rangle+\inf_{y^*\in X^*}
\tfrac{2(n-1)}{n}\Big(F_A(x,y^*)+F_B(x,x^*-y^*)\Big)\\
&=\tfrac{2-n}{n}\langle x,x^*\rangle+\tfrac{2(n-1)}{n} F_A\Box_2F_{B}(x,x^*)\\
&=\tfrac{2-n}{n}\langle x,x^*\rangle+\tfrac{2(n-1)}{n} F_{A+B}(x,x^*),
\quad(\text{by \cite[Theorem~5.10]{BWY3}})\\
&=F_{A+B,\,n}(x,x^*)\quad(\text{by Theorem~\ref{Festival:2}}).
\end{align*}
Similarly, using \eqref{New:2}, we have $F_{A+B,\, \infty}= F_{A,\,\infty}\Box_2F_{B,\,\infty}$.
\end{proof}
\begin{remark}Theorem~\ref{FS6} generalizes
 \cite[Theorem~5.4]{BBW}.\end{remark}


\end{document}